\numberwithin{equation}{section}
 \providecommand{\ts}{\hspace{15pt}}
\newcommand{\mb}{\mathbb}
\newcommand{\mc}{\mathcal}
\newcommand{\lap}{\triangle}
\theoremstyle{plain}
\newtheorem{Th}{Theorem}[section]
\newtheorem{Lemma}[Th]{Lemma}
 \theoremstyle{definition}
\newtheorem{?}[Th]{Problem}
\providecommand{\hs}{\hspace{15pt}}
\begin{document}
\title[Maximum Norm Estimates] {$ L^{\infty}$- estimates of the solution of the Navier-Stokes equation for non-decaying initial data}

\author{Santosh Pathak}

\address{University of New Mexico \\ 
Department of Mathematics and Statistics\\ 
Albuquerque, NM 87131,USA  }
\email{spathak@unm.edu}

%\subjclass[2010]{Primary: 05C??. Secondary: 05C??}
%\keywords{Incompressible Navier-Stokes equations,Maximum norm estimates, The Leray projector} 
\begin{abstract} In this paper, we derive the main result of a paper by H-O Kreiss and Jens Lorenz from a different approach than the method proposed in their paper. More precisely, we consider the Cauchy problem for the incompressible Navier-Stokes equations in $\mb R^n$ for $n\geq 3 $  with non-decaying  initial data and derive a priori estimates of the maximum norm of all derivatives of the solution in terms of the maximum norm of the initial data. This paper is also an extension of their paper to higher dimension. 
\end{abstract}
\maketitle
\section{Introduction}
\bigskip

 We consider the Cauchy problem of the Navier-Stokes equations in $\mb R^n$ for $n\geq 3$: 
\begin{align}
 u_t + u \cdot \nabla u + \nabla p = \triangle u, \ts \nabla \cdot u = 0, 
\end{align}
with initial condition 
\begin{align}
u(x,0)= f(x), \ts x \in  \mb R^n, 
\end{align}
where $u=u(x,t)= (u_1(x,t), \cdots u_n(x,t)) $ and $ p=p(x,t) $ stand for the unknown velocity vector field of the fluid and its pressure, while $f=f(x)=(f_1(x),\cdots  f_n(x) ) $ is the given initial velocity vector field. In what follows, we will use the same notations for the space of vector-valued and scalar functions for convenience in writing.
\medskip 

There is a large literature on the existence and uniqueness of solution of the Navier-Stokes equations in $\mb R^n $.  For given initial data solutions of (1.1) and (1.2)  have been constructed in various function spaces. For example, if $f\in L^r $ for some $r$ with $ 3 \leq r < \infty $, then it is well known that there is a unique classical solution in some maximum interval of time $0\leq t < T_f $ where $ 0 < T_f \leq \infty $. But for the uniqueness of the pressure one requires $| p(x,t)| \to 0$ as $|x| \to \infty$. See \cite{Kato} and \cite{Wiegner}  for $r=3 $ and \cite{Amann}  for $ 3 < r < \infty $.
\medskip 

If $f\in L ^{\infty}$ then existence of a regular solution follows from \cite{Cannon}. The solution is only unique if one puts some growth restrictions on the pressure as $| x| \to \infty$. A simple example of non-uniqueness is demonstrated in \cite{Kim} where the velocity $u$ is bounded  but $|p(x,t)| \leq C |x| $. In addition, an estimate $|p(x,t)|\leq C(1+|x|^{\sigma} ) $ with $ \sigma <1$ ( see \cite{Galdi} ) imply uniqueness. Also, the assumption $ p \in L^1_{loc}(0,T; BMO) $ (see \cite{G}) implies uniqueness. 
\medskip

In this paper we are interested in reproving the results of a paper by H-O Kreiss and J. Lorenz (see \cite{Lorenz}) for the initial data $ f \in L^{\infty}(\mb R^n) $ for $ n \geq 3$ using different approach than theirs in terms of dealing with the pressure term in the Navier-Stokes equations. The approach in this paper, to prove the principal result of the Kreiss and Lorenz paper, is more \say{functional analytic} approach in which the role of \say{the Leray projector} is being implemented to get rid of the pressure term from the Navier-Stokes equations. As a consequence of that, the details and techniques in obtaining some significantly complicated  results related to the pressure part in the Kreiss and Lorenz paper are being avoided which makes this paper different and simpler in that sense. At the same time, this paper is also an extension of the work by Kreiss and Lorenz to the higher space dimension whereas such  generalization, in the Kreiss and Lorenz paper by their approach, seems complicated because of the non-local nature of the pressure term in the Navier-Stokes equations. Since the main source of this paper is the Kreiss and Lorenz paper, it is appropriate to give some insight of their work in this paper as well.  Before we start outlining  some key aspects of their paper, we introduce the following notations and will be using them throughout this paper. 
\begin{align*}
| f|_{\infty} = \mathop{\sup}_{x} { |f(x)|}  \ts \text{with} \ts |f(x)|^2= \sum_{i} f_i^2(x), 
\end{align*}
and $D^{\alpha} =D_1^{\alpha_1} \cdots D_n^{\alpha_n},  D_i = \partial /{\partial x_i } \hspace{5pt} \text{for a multiindex} \hspace{5pt} \alpha =(\alpha_1,\cdots, \alpha_n )$.
In what follows, if  $ | \alpha | = j $, for any $j=0,1,\cdots $, then we will denote $D^{\alpha}=D_1^{\alpha} \cdots D_n^{\alpha_n} $ by $D^j$. We also set 
\begin{align*}
|\mathcal {D}^j u(t) |_{\infty} :=|\mathcal {D}^j u(\cdot, t ) |_{\infty} =\mathop{\max}_{|\alpha|=j}  |D^{\alpha} u(\cdot,t)|_{\infty} .
\end{align*}
Clearly, $|\mathcal{D}^j u(t) |_{\infty} $ measures all space derivatives of order $j$ in maximum norm.
\bigskip

Following theorem is the main result of the paper by Kreiss and Lorenz \cite{Lorenz} for $n=3$ which is also the principal result of this paper for $n\geq 3$.
\bigskip 
\begin{Th}
Consider the Cauchy problem for the Navier-Stokes equations (1.1), (1.2), where $ f \in L^{\infty} ( \mb R^n)$ and $  \nabla \cdot f = 0 $ is understood in the sense of distribution. There is a constant $c_0>0$ and for every $j=0,1,\cdots $ there is a constant $K_j$ so that 
\begin{align}
t^{j/2} | \mc D^j u(t) |_{\infty} \leq K_j | f|_{\infty} \ts \text{for} \ts 0 < t \leq  \frac{c_0}{|f|_{\infty}^2}.
\end{align}
The constants $c_0$ and $K_j$ are independent of $t$ and $f$.
\end{Th}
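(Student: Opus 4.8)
The plan is to remove the pressure by applying the Leray--Helmholtz projector $P$ onto divergence-free fields, which turns (1.1)--(1.2) into the single equation $u_t = \triangle u - P\,\nabla\!\cdot(u\otimes u)$ with $u(\cdot,0)=f$, and then to solve this in its mild (Duhamel) form
\begin{align*}
u(t) = e^{t\triangle}f \;-\; \int_0^t e^{(t-s)\triangle}\,P\,\nabla\!\cdot\!\big(u\otimes u\big)(s)\,ds .
\end{align*}
I would construct the solution by a fixed-point argument in the Banach space $X_T$ of functions with finite norm $\|u\|_{X_T}=\sup_{0<t\le T}|u(t)|_{\infty} + \sup_{0<t\le T} t^{1/2}\,|\mc D^1 u(t)|_{\infty}$, with $T$ chosen below. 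Since the theorem only asks for a priori estimates, the same computation may instead be run on a given solution, but organising it as a contraction makes the bookkeeping transparent.

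The analytic input is a short list of kernel bounds: $e^{t\triangle}\colon L^{\infty}\to L^{\infty}$ with norm $1$; $\nabla e^{t\triangle}\colon L^{\infty}\to L^{\infty}$ with norm $\le c\,t^{-1/2}$; and --- this is the crucial point, because $P$ is built from Riesz transforms and is \emph{not} bounded on $L^{\infty}$ --- the composed operator $e^{t\triangle}P\,\nabla\!\cdot$ acting on bounded tensor fields satisfies $|e^{t\triangle}P\,\nabla\!\cdot F|_{\infty}\le c\,t^{-1/2}|F|_{\infty}$. The last estimate holds because the convolution kernel of $e^{t\triangle}P\nabla$ is smooth away from the origin, has Gaussian-type decay at infinity, and is homogeneous under parabolic scaling with exponent $t^{-(n+1)/2}$, so its $L^1(\mb R^n)$ norm is $\le c\,t^{-1/2}$; the smoothing of the heat kernel absorbs the singularity of the projector. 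Granting these, the nonlinear map is a contraction on a ball of $X_T$ once $T^{1/2}$ times the ball radius is small, and since the natural radius is comparable to $|f|_{\infty}$ the admissible lifespan is $T\sim c_0/|f|_{\infty}^2$, which is exactly the scaling-invariant interval in (1.3). This gives (1.3) for $j=0$ and $j=1$ together with the constants $K_0,K_1$.

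For the higher derivatives $j\ge 2$ I would induct on $j$, differentiating the Duhamel formula and splitting a multiindex $\alpha$ with $|\alpha|=j$ as $\alpha=\alpha'+\alpha''$:
\begin{align*}
D^{\alpha}u(t) = D^{\alpha}e^{t\triangle}f \;-\; \int_0^t D^{\alpha'}e^{(t-s)\triangle}\,P\,\nabla\!\cdot\!\big(D^{\alpha''}(u\otimes u)\big)(s)\,ds ,
\end{align*}
distributing derivatives so that at most one of them reaches the kernel near $s=t$. Each spatial derivative on $e^{\tau\triangle}$ costs $\tau^{-1/2}$; $D^{\alpha''}(u\otimes u)$ is controlled by the already-bounded lower-order weighted norms through the Leibniz rule; and the resulting time integrals are of Beta-function type $\int_0^t (t-s)^{-a}s^{-b}\,ds$ with $a,b<1$ and $a+b<1$, hence finite, and tracking the powers of $t$ reproduces the weight $t^{j/2}$ and produces $K_j$. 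The main obstacle --- and the place where the paper's ``functional analytic'' viewpoint earns its keep --- is precisely the $L^{\infty}\to L^{\infty}$ bound for $e^{t\triangle}P\,\nabla\!\cdot$ with the sharp $t^{-1/2}$ rate: without the heat smoothing $P$ is unbounded on $L^{\infty}$, so the whole scheme rests on the pointwise and $L^1$ estimates for the Oseen-type kernel $e^{t\triangle}P\nabla$ and on verifying that they are uniform in the non-decaying (merely bounded) setting, where no decay of $f$ at infinity is available.
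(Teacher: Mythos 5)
Your overall architecture coincides with the paper's: project out the pressure with the Leray operator, write the mild (Duhamel) formulation, establish an $L^\infty\to L^\infty$ smoothing bound for $D^j e^{t\triangle}\mb P$ of order $t^{-j/2}$, and then induct on $j$, distributing derivatives so that at most one lands on the kernel near the singular endpoint $s=t$ and estimating Beta-type time integrals. The genuine difference is in how the crucial kernel bound is proved, and in the packaging of the $j=0,1$ step. For the kernel bound the paper does not analyze the Oseen-type kernel pointwise: it shows (Lemma 2.1) that $D^j\theta(t)$ lies in the Hardy space $\mc H^1(\mb R^n)$ with $\|D^j\theta(t)\|_{\mc H^1}\le C_j t^{-j/2}$ (by choosing the test function in the maximal-function definition to be the heat kernel itself, so the maximal function is computed exactly), and then uses boundedness of the Riesz transforms on $\mc H^1$ together with $\|\cdot\|_{L^1}\le\|\cdot\|_{\mc H^1}$ and Young's inequality to get Lemma 2.2. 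This is cleaner than your route and sidesteps a point where your justification is slightly off: the kernel of $e^{t\triangle}\mb P\nabla$ does \emph{not} have Gaussian-type decay at infinity --- the Riesz transforms destroy that, leaving only algebraic decay of order $|x|^{-(n+1)}$ --- although that decay is still integrable, so your $L^1$ bound and the $t^{-1/2}$ scaling survive; you would need to actually prove that decay, which is the nontrivial content your sketch defers. For the low-order estimates the paper works with an assumed smooth solution (existence is quoted from Giga et al.) and runs a continuation argument: it sets $V(t)=|u(t)|_\infty+t^{1/2}|\mc D u(t)|_\infty$, derives $V(t)\le C|f|_\infty+Ct^{1/2}\max_{s\le t}V^2(s)$, and argues by contradiction at the first time $V$ reaches $2C|f|_\infty$, which yields $T_f>c_0/|f|_\infty^2$ with $c_0=1/(16C^4)$; your contraction-mapping formulation proves the same bounds and additionally gives existence, at the cost of slightly more setup. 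Both give the scaling-invariant lifespan and the constants $K_j$ by the same induction, so your plan is sound once the $L^1$ estimate for the composed kernel is actually established rather than asserted.
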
 
\bigskip

Let us briefly discuss some key ideas of the Kreiss and Lorenz paper. Rewrite (1.1) as 
\begin{align*}
u_t = \triangle u + Q
\end{align*}
where 
\begin{align*}
Q= - \nabla p - u \cdot \nabla u. 
\end{align*}
Applying $D^j$ for $ j \geq 0 $ and using \textit{Duhamel's principle}, one obtains 

\begin{align}
v(t) = D^j e^{\triangle t } f + \int_0^t e^{\triangle (t-s) } D^j Q(s) ds,  \ts  v:= D^j u. 
\end{align}
Roughly speaking, obtaining the desired result of the Kreiss and Lorenz paper is a twofold in view of equation (1.4): first, estimates on the solution of the heat equation. Second, estimates on the derivatives of $Q$. Also, notice in (1.4), one can move one derivative $D$ to the heat semi-group and consequently requiring an estimate for $ | D^{j-1} Q |_{\infty} $ to estimate $ | v ( t)|_{\infty} $. Clearly, it is necessary to determine the pressure term $p$ of the Navier-Stokes equations so that $(u,p)$ solves (1.1) and (1.2);  the  estimate  of the derivatives of $p$ is being used to estimate $ | D^{j-1} Q |_{\infty}$. To proceed towards obtaining the required estimates on the pressure, Kreiss and Lorenz determine the pressure from the Poisson equation
\begin{align}
\triangle p = - \nabla \cdot ( u \cdot \nabla ) u 
\end{align}
which is given by 
\begin{align}
p = \sum_{i,j} R_i R_j ( u_i u_j) ,
\end{align}
where $R_i = (-\triangle )^{-1/2} D_i $ is the ith Riesz transform. Since the Riesz transforms are not bounded in $L^{\infty}(\mb R^n)$, the pressure term $p \in L^1_{loc} (0,T;BMO) $ where $ BMO$ is the space of functions of bounded mean oscillation. Because of the non-local nature of the pressure, the proof of Theorem 1.1 of the Kreiss and Lorenz paper  is complicated, however. This is where the method proposed in this paper deviates significantly from the approach adopted  by Kreiss and Lorenz in their paper \cite{Lorenz}.
\bigskip 

For the purpose of proving Theorem 1.1 for $n \geq 3 $, we start by transforming moemntum equation of  the Navier-Stokes equations  into the abstract ordinary differential equation for $u$
\begin{align}
u_t = \triangle u - \mb P (u \cdot \nabla ) u 
\end{align}
by eliminating the pressure, where $\mb P $ is the Leray projector  defined by 
\begin{align*}
\mb P = (  \mb P_{ij})_{1\leq i,j \leq n } , \ts  \mb P_{ij} = \delta_{ij} + R_i R_j; 
\end{align*}
where $ R_i$ is as in (1.6) and $ \delta_{ij}$ is the Kronecker delta function. Note that the equation (1.7) is obtained from (1.1 ) by applying the Leray projector with the properties $\mb P( \nabla p)=0, \mb P ( \triangle u ) = \triangle u$, since $\nabla \cdot u = 0 $. We use the solution operator $e^{ \triangle t } $ of the heat equation to transform the abstract differential equation into an integral equation 
\begin{align}
u(t) = e^{\triangle t} f -\int_0^t e^{ \triangle (t-s) } \mb P(u \cdot \nabla  u) (s) ds \ts t>0.
\end{align}
%Since $ \mb P ( u \cdot \nabla ) u = \nabla \cdot \mb P (u\otimes u ) $ above integral equation (1.7) can also be written as 
%\begin{align}
%u(t) = e^{\triangle t } f -\int_0^t \nabla \cdot e^{ \triangle (t-s) } \mb P(u \otimes u)(s)  ds .
%\end{align}
%where $ (u \otimes u )_{ij} = u_i u_j $. The equation  (1.8) is formally equivalent to (1.6). 
\medskip

In a paper by Giga and others \cite{Giga} for  $n \geq 2 $, they consider the initial data $f\in BUC (\mb R^n)$ which is the space of all bounded uniformly continuous functions or in $L^{\infty} (\mb R^n) $  which is the space of all essentially bounded  functions, and construct a unique local in time  solution of (1.8). Such solution of (1.8)  is called mild solution of (1.1) and (1.2). They later proved in the same paper that such mild solution is indeed a strong solution of the Navier-Stokes equations (1.1) and (1.2) in some maximum interval of time.  In addition, for essentially bounded initial data, existence and uniqueness of a solution of (1.1) and (1.2) is also proved in \cite{Cannon}; however, Giga and others in \cite{Giga} claim  that their approach is simpler than the method proposed in \cite{Cannon}.  In the same paper by Giga and others  \cite{Giga}, while constructing such mild solution of (1.1) and (1.2), it requires to obtain the estimate  $  t^{1/2}  | \nabla u |_{\infty} \leq C | f|_{\infty}  $ for some constant $C>0$ independent of $t$ and $f$ in some maximum interval of time.  However, such maximum norm estimates for higher order derivatives of the velocity field had not been achieved until H-O Kreiss and J. Lorenz  obtained in \cite{Lorenz}  for $ f \in L^{\infty} ( \mb R^3)$. 
\bigskip

The main work of this paper will focus on deriving estimate (1.3) of Theorem 1.1 by a \say{different approach} in a few ways  than that of the Kreiss and Lorenz paper adopts.  At the same time, this paper will also  demonstrate the fact, the absence of the pressure term in the transformed abstract differential equation (1.7) eliminates significant amount of work of the paper by Kreiss and Lorenz while obtaining the uniform estimates of the  pressure and its derivatives. However, there are some intriguing developments in the work of this paper due to the application of the Leray projector in our \say{different approach}. 
\bigskip

Major difficulty in proving Theorem 1.1 lies in the fact that the Leray projector $\mb P$ is not a bounded operator in $ L^{\infty}(\mb R^n)$, since the Riesz transforms are not bounded in this space although they are bounded in $L^r(\mb R^n)$ for $ 1 < r < \infty$. To overcome the difficulty, we obtain an uniform bound on the composite operator $ D^j e^{\lap t} \mb P$ for $ j= 0,1 \cdots $ in section 2.
\bigskip

This paper is organized in the following ways: In section 2 we introduce a few  estimates for the solution of the heat equations and state and prove a few lemmas and a corollary which are used later.  In section 3, for illustrative purpose, we introduce an analogous system and prove Theorem 3.1 which establishes result of Theorem 1.1 for the analogous system.  In section 4 we prove Theorem 1.1 using the same techniques as in the proof of Theorem 3.1.  Finally, in section 5, we outline some remarks on the use of the estimate (1.3)  obtained in Theorem 1.1.
%In the following section, we introduce some uniform estimates of the solution of the heat equation. We also state and prove a few lemmas and a corollary to set up the foundation for the proof of Theorem 1.1. 
\medskip 

\section{ Some Auxiliary Results} 
\bigskip
Let us consider $f\in L^{\infty} ( \mb R^n) $. The solution of 
\begin{align*}
u_t = \triangle u , \ts u = f \ts \text{at} \ts t=0,
\end{align*}
is denoted by 
\begin{align*}
u(t):= u(\cdot, t ) = e^{\triangle t }f = \theta(x,t) * f  
\end{align*}
where $\theta(t)= \theta (x,t)=1/(4\pi t )^{n/2} e^{-|x|^2/{4t}}, t>0 $ is the $n$ dimensional heat kernel in $\mb R^n$ and $*$ is the convolution operator. 
It is well known that 
\begin{align}
|e^{ \triangle t } f |_{\infty} &\leq |f|_{\infty}, \\
| \mc D^j e^{ \triangle t } f |_{\infty} &\leq C_j t^{-j/2} | f |_{\infty}, \ts t > 0, \ts j=1,2, \cdots 
\end{align}
Here, and in the following $C, C_j, c,$ etc are positive constants that are independent of $t$ and the initial function $f$.
\bigskip

\begin{Lemma}
Let $ \theta(t) = \theta(x,t)$ be the n-dimensional heat kernel in $ \mb R^n$. Then, for every $ j = 1,2 \cdots $  and every $ t>0,  D^j \theta (t) $  belongs to the Hardy space $ \mc H^1( \mb R^n ) $ and  
\begin{align}
|| D^j \theta(t) ||_{\mc H^1( \mb R^n )} \leq C_j t^{-j/2}. 
\end{align}
for some constant $C_j$.
\end{Lemma}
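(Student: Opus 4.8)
The plan is to verify that $D^j\theta(t)$ lies in the Hardy space $\mc H^1(\mb R^n)$ by a direct maximal-function estimate, exploiting the exact self-similar form of the heat kernel. Recall one standard characterization: $g\in\mc H^1(\mb R^n)$ with $\|g\|_{\mc H^1}\asymp \|g\|_{L^1}+\|Mg\|_{L^1}$, where $Mg(x)=\sup_{\varepsilon>0}|(\phi_\varepsilon * g)(x)|$ is the radial maximal function associated to a fixed Schwartz bump $\phi$ with $\int\phi=1$ (equivalently one may use the grand maximal function; any of the equivalent norms will do). Since $\theta(x,t)=t^{-n/2}\Theta(x/\sqrt t)$ with $\Theta(y)=(4\pi)^{-n/2}e^{-|y|^2/4}$ a Schwartz function, differentiating gives $D^j\theta(x,t)=t^{-(n+j)/2}(D^j\Theta)(x/\sqrt t)$. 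Thus $D^j\theta(t)$ is, up to the scaling factor $t^{-j/2}$, an $L^1$-normalized dilate of the fixed Schwartz function $D^j\Theta$, and the whole statement reduces by the change of variables $x\mapsto\sqrt t\,x$ to the single claim that a fixed Schwartz function $\psi:=D^j\Theta$ belongs to $\mc H^1(\mb R^n)$, with the constant $C_j=\|\psi\|_{\mc H^1}$; the $t^{-j/2}$ then appears precisely because $\psi$ is a $j$-th derivative (each derivative contributes one factor $t^{-1/2}$ under the parabolic scaling, while the $t^{-n/2}$ is absorbed by $L^1$-invariance of the dilation).

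First I would record that $\psi=D^j\Theta$ is Schwartz, so $\psi\in L^1$ trivially, and crucially $\int_{\mb R^n}\psi(x)\,dx=0$ since $\psi$ is a derivative of an integrable function (integrate by parts / it is $\widehat\psi(0)=0$). The vanishing moment is what makes $\mc H^1$ membership plausible. Next I would estimate the grand (or radial Schwartz) maximal function $M\psi$: for $|x|\le 1$ one uses the crude bound $M\psi(x)\le C\sup|\psi|\le C$, which is integrable over the unit ball; for $|x|\ge 1$ one uses the cancellation $\int\phi_\varepsilon=1$ together with a Taylor expansion of $\phi_\varepsilon$ (or directly the rapid decay of $\psi$) to get $M\psi(x)\le C(1+|x|)^{-(n+1)}$, which is integrable at infinity. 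Combining, $\|M\psi\|_{L^1}<\infty$, hence $\psi\in\mc H^1(\mb R^n)$. Undoing the dilation $x=\sqrt t\,y$ in the $\mc H^1$ norm — noting that $\mc H^1$ dilations of an $L^1$-normalized function preserve the $\mc H^1$ norm — yields $\|D^j\theta(t)\|_{\mc H^1}= t^{-j/2}\|\psi\|_{\mc H^1}=:C_j t^{-j/2}$, which is (2.3).

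The main obstacle is purely bookkeeping: pinning down which of the several equivalent definitions of $\mc H^1$ to use and checking the maximal-function decay estimate $M\psi(x)\lesssim (1+|x|)^{-(n+1)}$ cleanly; once the scaling is set up correctly the $t$-dependence is automatic and there is no analytic difficulty. An alternative, if one prefers to avoid maximal functions altogether, is to produce an explicit atomic decomposition of $\psi=D^j\Theta$ — writing $\psi$ as a convergent sum $\sum_k \lambda_k a_k$ of $L^\infty$-normalized atoms supported on dyadic annuli, with $\sum|\lambda_k|<\infty$ thanks to the Gaussian decay and the zero-mean property — and then rescale; I would mention this as a remark but carry out the maximal-function argument as the main line, since it is shorter.
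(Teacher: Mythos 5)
Your proposal is correct, but it takes a genuinely different route from the paper. The paper works directly with the radial maximal function defining $\mc H^1$, choosing the test bump to be the heat kernel itself, $h(x)=\theta(x,1)$, so that by the semigroup property $h_s * D^j\theta(t)$ is again a spatial derivative of the heat kernel at a later time; the maximal function is then controlled by $|D^j\theta(\cdot,t)|$, which is already in $L^1$, and the bound $C_jt^{-j/2}$ is read off from the $L^1$ norm. You instead exploit the parabolic self-similarity $D^j\theta(x,t)=t^{-(n+j)/2}(D^j\Theta)(x/\sqrt t)$ to reduce everything to the single statement that the fixed Schwartz function $\psi=D^j\Theta$, which has vanishing integral, belongs to $\mc H^1$, and then recover the $t$-dependence from the dilation invariance of the $\mc H^1$ norm on $L^1$-normalized dilates. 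What your approach buys is that the factor $t^{-j/2}$ becomes completely transparent (it is exactly the cost of $j$ derivatives under parabolic scaling), and the only analytic input is the standard cancellation-plus-decay estimate $M\psi(x)\lesssim (1+|x|)^{-(n+1)}$ for mean-zero Schwartz functions; this is slightly more work than the paper's two-line semigroup trick, but it is also more robust: the paper's assertion that $\sup_{s>0}|h_s * D^j\theta(t)|$ equals $|D^j\theta(t)|$ implicitly uses a pointwise monotonicity of $\tau\mapsto|D^j\theta(x,\tau)|$ that does not hold for every $x$, whereas your maximal-function estimate sidesteps that issue entirely. One small point to flesh out in a final write-up is the verification of the decay $M\psi(x)\lesssim(1+|x|)^{-(n+1)}$, which requires splitting the supremum over $\varepsilon$ into the regimes $\varepsilon\gtrsim|x|$ (where the cancellation $\int\psi=0$ is used) and $\varepsilon\lesssim|x|$ (where the rapid decay of $\psi$ is used); as sketched it is slightly compressed but entirely standard.
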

\begin{proof}
First, let us recall the definition of the Hardy space.
\begin{align*}
\mc H^1( \mb R^n ) = \{ u \in L^1(\mb R^n ) \ts s.t  \ts  \mathop{\sup}_{s>0} | h_s * u | \in L^1( \mb R^n ) \} 
\end{align*}
for some Schwartz class function $h$ where $h_s(x)= s^{-n} h(\frac{x}{s}), s>0$  such that $ 0 \leq h \leq 1 $ and $ \int h = 1 $. We may endow $ \mc H^1( \mb R^n )$ with the norm 
\begin{align*}
||u ||_{\mc H^1( \mb R^n )} = || u ||_{L^1(\mb R^n)} + || \mathop{\sup}_{s>0} | h_s * u | ||_{  L^1( \mb R^n ) }. 
\end{align*}
For any $j=0,1, \cdots $, we want to prove $ D^j \theta(t)  \in {\mc H^1( \mb R^n )}$, that means, it suffices to show that $\mathop{\sup}_{s>0} | h_s * D^j \theta(t) | \in L^1( \mb R^n )$. For that, we take $ h(x) = \theta(x,1)$ and notice 
\begin{align*}
| h_s * D^j \theta (t) (x) | = | D^j \theta(t+ s  , x) | 
\end{align*}
so that 
\begin{align*}
\mathop{\sup}_{s>0} | h_s * D^j \theta(t) | = | D^j \theta(t) | \in L^1(\mb R^n ). 
\end{align*}
Finally, we arrive at  
\begin{align*}
|| D^j \theta(t) ||_{\mc H^1( \mb R^n )} \leq C_j t^{-j/2}, \ts t>0.
\end{align*}
\end{proof}

\begin{Lemma}
For any  $f \in L^{\infty}( \mb R^n)$. Let  $j \geq 1$, there is a constant $C_j$ such that 
\begin{align}
| \mc D^j e^{\triangle t}  \mb P f |_{\infty} \leq C_j t^{-j/2} | f |_{\infty}  \ts \text{for} \ts 0 < t \leq T. 
\end{align}
\end{Lemma}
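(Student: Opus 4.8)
The goal is to bound $|\mathcal{D}^j e^{\triangle t}\mathbf{P} f|_\infty$ by $C_j t^{-j/2}|f|_\infty$, despite the fact that $\mathbf{P}$ itself is unbounded on $L^\infty$. The natural strategy is to absorb the derivatives and the Riesz-transform factors hidden in $\mathbf{P}$ into the heat kernel, where the smoothing makes everything integrable, and then realize the full operator $D^\alpha e^{\triangle t}\mathbf{P}$ as convolution against a kernel whose $L^1$ norm is $O(t^{-j/2})$. Concretely, I would write $\mathbf{P}_{ik} = \delta_{ik} + R_iR_k$ and note $R_iR_k = (-\triangle)^{-1}D_iD_k$, so that a typical component of $D^\alpha e^{\triangle t}\mathbf{P}$ is, up to the trivial $\delta_{ik}$ term (which is handled by (2.2) directly), the operator $D^\alpha D_iD_k(-\triangle)^{-1}e^{\triangle t}$. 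The point is that $(-\triangle)^{-1}e^{\triangle t}$ is a smoothing operator: formally $(-\triangle)^{-1}e^{\triangle t} = \int_t^\infty e^{\triangle s}\,ds$, so $D_iD_k(-\triangle)^{-1}e^{\triangle t} = \int_t^\infty D_iD_k e^{\triangle s}\,ds$ is convolution with $\int_t^\infty D_iD_k\theta(s)\,ds$, a bona fide integrable function.

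**Key steps.** First, I would establish the kernel representation: for $|\alpha| = j \geq 1$ (so at least one derivative is available),
\[
D^\alpha e^{\triangle t}\mathbf{P}_{ik} f = D^\alpha\theta(t)*f \cdot \delta_{ik} + \Big(\int_t^\infty D^\alpha D_iD_k\,\theta(s)\,ds\Big)*f,
\]
with the integral converging because $D^\alpha D_iD_k\theta(s)$ decays like $s^{-(j+2)/2}\cdot s^{-n/2}$ times a Gaussian, integrably in $s$ near $\infty$. Second — and this is where Lemma 2.1 enters — I would invoke the boundedness of Riesz transforms (and their iterates) on the Hardy space $\mathcal{H}^1(\mathbb{R}^n)$: since $D^{\alpha'}\theta(s) \in \mathcal{H}^1$ for $|\alpha'|\geq 1$ with $\|D^{\alpha'}\theta(s)\|_{\mathcal{H}^1}\leq C s^{-|\alpha'|/2}$, and since $R_iR_k$ maps $\mathcal{H}^1\to\mathcal{H}^1$ boundedly, we get that $R_iR_k D^\alpha\theta(t)$ is an $L^1$ function with
\[
\|R_iR_k D^\alpha\theta(t)\|_{L^1}\leq C\|R_iR_k D^\alpha\theta(t)\|_{\mathcal{H}^1}\leq C\|D^\alpha\theta(t)\|_{\mathcal{H}^1}\leq C_j t^{-j/2}.
\]
Here one peels one derivative off $D^\alpha$ to feed Lemma 2.1 (which needs $j\geq 1$ — available since $|\alpha|\geq 1$) and lets the commuting Fourier multipliers $R_iR_k$ act. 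Third, since $e^{\triangle t}\mathbf{P}_{ik}$ then has a convolution kernel $\psi_{ik,\alpha}(t)\in L^1(\mathbb{R}^n)$ with $\|\psi_{ik,\alpha}(t)\|_{L^1}\leq C_j t^{-j/2}$, Young's inequality gives $|D^\alpha e^{\triangle t}\mathbf{P}_{ik}f|_\infty \leq \|\psi_{ik,\alpha}(t)\|_{L^1}|f|_\infty \leq C_j t^{-j/2}|f|_\infty$; summing over $i,k$ and maximizing over $|\alpha|=j$ yields (2.5). The hypothesis $\nabla\cdot f = 0$ is not even needed for this operator bound (it matters elsewhere), but $j\geq 1$ is essential: it is what guarantees at least one derivative to place on $\theta$ so the Hardy-space norm is finite.

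**Main obstacle.** The crux is justifying that $D^\alpha e^{\triangle t}\mathbf{P}_{ik}$ is genuinely convolution against an $L^1$ kernel and identifying that kernel with $R_iR_k D^\alpha\theta(t)$ (equivalently with $\int_t^\infty D^\alpha D_iD_k\theta(s)\,ds$) — i.e. the interchange of the Riesz multiplier, the derivatives, and the heat semigroup at the level of tempered distributions, together with the claim $\|g\|_{L^1}\leq \|g\|_{\mathcal{H}^1}$ and the boundedness of $R_iR_k$ on $\mathcal{H}^1$. The Fourier-multiplier identities are formal and easy on the Fourier side; the only real content is the $\mathcal{H}^1$ boundedness of Riesz transforms, which is classical (Fefferman–Stein), plus the already-proved Lemma 2.1 supplying the quantitative $\mathcal{H}^1$ bound $C_j t^{-j/2}$. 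Once those are in hand the estimate is immediate from Young's inequality, so I expect the write-up to be short: set up the multiplier identity, cite $\mathcal{H}^1$-boundedness of $R_iR_k$, apply Lemma 2.1, and conclude.
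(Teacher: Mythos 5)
Your proposal is correct and follows essentially the same route as the paper: write $D^{\alpha}e^{\triangle t}\mathbb{P}$ as convolution with the kernel $(\delta_{il}+R_iR_l)D^{\alpha}\theta(t)$, use the $\mathcal{H}^1$-boundedness of the Riesz transforms together with Lemma 2.1 and the embedding $\|\cdot\|_{L^1}\leq\|\cdot\|_{\mathcal{H}^1}$ to get the $L^1$ bound $C_jt^{-j/2}$, and conclude by Young's inequality. The extra representation via $\int_t^{\infty}D^{\alpha}D_iD_k\theta(s)\,ds$ is a harmless aside not needed in the paper's argument.
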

\begin{proof}
For $ 1 \leq i \leq n$ and $ t > 0$, by the definition of the Leray projector, we write
\begin{align*}
  (D^j e^{ \triangle t}  \mb P f)_i  & = D^j e^ {\triangle t}  f_i + \sum_{l=1}^n D^j e^{\triangle t}  R_i R_l f_l \\
& = D^j \theta(t) * f_i + \sum_{l=1}^n D^j R_i R_l \theta(t) * f_l \\
& = \sum_{l=1}^n ( \delta_{il} + R_iR_l ) ((D^j \theta(t) ) * f_l)  \\
& = \sum_{l=1}^n k_{il}(t) * f_l
\end{align*}
where the kernel $k_{il} (t) = ( \delta_{il} + R_i R_l ) (D^j \theta(t))$. Since the Riesz transforms are bounded in $ \mc H^1( \mb R^n )$  and $ || \cdot ||_{L^1(\mb R^n ) } \leq || \cdot ||_{\mc H^1( \mb R^n )} $, we have 
\begin{align*}
|| k_{il} (t) ||_{L^1(\mb R^n) } & \leq || k_{il} (t) ||_{\mc H^1( \mb R^n )}  \\
& \leq || D^j \theta (t) ||_{\mc H^1( \mb R^n )}. 
\end{align*}
Thus, from previous Lemma 2.1 we obtain 
\begin{align*}
|| k_{il} (t) ||_{L^1(\mb R^n) } \leq C_j t^{-j/2} \ts \text{for} \ts t>0. 
\end{align*}
Finally, by the Young's inequality of convolution we estimate as 
\begin{align*}
|( D^j e^{\triangle t }  \mb P f)_i |_{\infty} &  \leq \sum_{l=1}^n | k_{il} * f_l |_{\infty}  \\
& \leq C || k_{il} (t) ||_{L^1(\mb R^n)} |f_l|_{\infty} \\
& \leq C_j t^{-j/2} |f_l|_{\infty}. 
\end{align*}
Hence, Lemma 2.2 is proved.
\end{proof}
\bigskip

%\begin{Cor}
% Let $ F \in L^{\infty} ( \mb R^n  \times [0,T] ) $ then the solution of
 %\begin{align}
 %u_t = \triangle u + D_i \mb P F , \ts u = 0 \ts \text{at} \ts t=0  
 %\end{align}
 %satisfies 
 %\begin{align}
 % |u(t)|_{\infty} \leq C t^{1/2} \mathop{\max}_{0\leq s \leq t } |F(s) |_{\infty}  \ts 0 < t \leq T 
% \end{align}
% for some $T >0$.
 
%\end{Cor}
%\begin{proof}
%The solution of (2.5) is given by 
 %\begin{align*}
 %u(t) = \int_0^t e^{\triangle (t-s) } D_i  \mb P F(s) ds, \ts 0 < t \leq T 
% \end{align*}
% and 
 %\begin{align*}
% |u(t)|_{\infty} \leq \int_0^t | e^{\triangle (t-s) } D_i \mb P F(s) |_{\infty} ds. 
%\end{align*}
%Use of Lemma 2.2 for $j=1$, and the fact $D_i $ commutes with the heat semi-group yield 
%\begin{align*}
% |u(t)|_{\infty} \leq \mathop{\max}_{0\leq s \leq t } |F(s)|_{\infty} \int_0^t (t-s)^{-1/2} ds .
% \end{align*}
 %Hence, we obtain 
% \begin{align*}
% |u(t)|_{\infty} \leq C t^{1/2} \mathop{\max}_{0\leq s \leq t } |F(s) |_{\infty}.
 %\end{align*}
 % \end{proof}
  
 \section{Estimates For the System $u_t = \triangle u + D_i \mb P  g$}
\bigskip

In this section we state and prove an analogous theorem  of Theorem 1.1 for the solution of an illustrative system. For that purpose, let us recall $ \mb P ( u \cdot \nabla ) u = \sum_i D_i \mb P ( u_i u)$ for  $  1 \leq i \leq n $. Therefore, it is appropriate to consider the  illustrative system to be 
\begin{align}
u_t = \triangle u + D_i \mb P ( g(u(x,t))) , \ts x\in \mb R^n , \ts t\geq 0 
\end{align}
with initial function 
\begin{align}
u(x,0) =f(x) \ts \text{where} \ts f\in L^{\infty} (\mb R^n). 
\end{align}
Here  $ g: \mb R^n \to \mb R^n $ is assumed to be quadratic in $u$. We will prove the maximum norm estimates of the derivatives of the solution of (3.1) and (3.2) by the maximum norm estimate of the initial function $f$. It is well-known that  the solution is $C^{\infty}$ in a maximal interval $ 0 < t < T_f$ where $ 0 < T_f \leq \infty$.

\bigskip

\begin{Th} Under the assumptions on $f$ and $g$ mentioned above, the solution of  (3.1) and (3.2) satisfies the following:

\begin{enumerate}[(a)]
\item There is a constant $c_0 >0$ with 
\begin{align*}
T_f > \frac{c_0}{| f|_{\infty}^2}
\end{align*}
and 
\begin{align}
| u (t)|_{\infty} \leq 2 | f |_{\infty} \ts \text{for} \ts 0 \leq t \leq \frac{c_0}{| f |_{\infty}^2}.
\end{align}
\item For every $ j = 1, 2, \cdots , $ there is a constant $K_j >0$ with 
\begin{align}
t^{j/2} | \mc D^j u(t) |_{\infty} \leq K_j | f |_{\infty}  \ts \text{for} \ts 0 < t \leq \frac{c_0}{| f |_{\infty}^2}.
\end{align}
\end{enumerate}
The constants $c_0$ and $K_j$ are independent of $t$ and $f$.
\end{Th}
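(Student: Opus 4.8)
The plan is to run a standard contraction/bootstrap argument on the integral equation
\[
u(t) = e^{\triangle t} f + \int_0^t e^{\triangle(t-s)} D_i \mb P\, g(u(s))\, ds,
\]
using Lemma 2.2 to absorb the Leray projector together with one spatial derivative. For part (a), I would work in the Banach space $X_T$ of continuous bounded functions $u:[0,T]\to L^\infty(\mb R^n)$ with norm $\sup_{0\le t\le T}|u(t)|_\infty$, and show the map $\Phi(u)(t) = e^{\triangle t}f + \int_0^t e^{\triangle(t-s)}D_i\mb P g(u(s))\,ds$ maps the ball of radius $2|f|_\infty$ into itself and is a contraction there, provided $T \le c_0/|f|_\infty^2$ for a suitably small absolute constant $c_0$. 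The key estimate is: since $g$ is quadratic, $|g(u(s))|_\infty \le C|u(s)|_\infty^2$, and by Lemma 2.2 with $j=1$,
\[
\Bigl|\int_0^t e^{\triangle(t-s)}D_i\mb P g(u(s))\,ds\Bigr|_\infty
\le \int_0^t C_1 (t-s)^{-1/2} |g(u(s))|_\infty\, ds
\le C (2|f|_\infty)^2 \int_0^t (t-s)^{-1/2}\,ds = 2C' t^{1/2}|f|_\infty^2.
\]
Combined with $|e^{\triangle t}f|_\infty \le |f|_\infty$ from (2.1), this is $\le 2|f|_\infty$ as soon as $t^{1/2}|f|_\infty^2 \le |f|_\infty/(2C')$, i.e. $t \le c_0/|f|_\infty^2$. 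The contraction estimate is identical, using that $g$ quadratic gives $|g(a)-g(b)|_\infty \le C(|a|_\infty+|b|_\infty)|a-b|_\infty$ on the ball. This yields a unique solution on $[0,c_0/|f|_\infty^2]$ with $|u(t)|_\infty \le 2|f|_\infty$; uniqueness in $X_T$ together with the known fact that the $C^\infty$ solution exists on a maximal interval $(0,T_f)$ forces $T_f > c_0/|f|_\infty^2$.

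For part (b) I would argue by induction on $j$, estimating $|\mc D^j u(t)|_\infty$ from the integral equation after distributing derivatives. For $j=1$: apply $D^\alpha$ with $|\alpha|=1$ to the integral equation, put the derivative from $D^\alpha$ onto nothing extra on the first term (use (2.2), giving $C_1 t^{-1/2}|f|_\infty$), and on the Duhamel term write $D^\alpha e^{\triangle(t-s)}D_i\mb P g(u(s))$ and move the single derivative $D_i$ (plus the $\mb P$) through Lemma 2.2 while keeping $D^\alpha$ on the semigroup — or more simply, bound $|D^\alpha e^{\triangle(t-s)}D_i \mb P g(u(s))|_\infty \le C_2(t-s)^{-1}|g(u(s))|_\infty$ by combining (2.2) for the $D^\alpha$ with a half-derivative and Lemma 2.2 for the $D_i\mb P$ with the other half — and since $\int_0^t (t-s)^{-1}\,ds$ diverges, I instead split: put $D^\alpha$ on $g(u(s))$ for $s$ near $t$ using the inductive control of $|\mc D^1 u|$, which is circular, so the correct route is the Kreiss–Lorenz-style weighted iteration. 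Concretely, I would introduce, for fixed large $N$ and each $j\le N$, the quantities $[u]_j := \sup_{0<t\le T} t^{j/2}|\mc D^j u(t)|_\infty$ and prove by simultaneous induction that each $[u]_j$ is finite and bounded by $K_j|f|_\infty$: feeding $|D^{j}(g(u(s)))|_\infty \le C\sum_{a+b=j}|\mc D^a u(s)|_\infty |\mc D^b u(s)|_\infty \le C\bigl(\sum_{a+b=j}[u]_a[u]_b\bigr)s^{-j/2}$ into the Duhamel term, moving one derivative $D_i$ and the projector onto the semigroup via Lemma 2.2 (order $1$) and the remaining $j$ derivatives as $\mc D^j e^{\triangle(t-s)}$ is not needed — rather, I keep $\mc D^{j-1}$ on the integrand and one derivative plus $\mb P$ on the kernel:
\[
t^{j/2}\Bigl|\int_0^t \mc D^{j} e^{\triangle(t-s)}D_i\mb P g(u(s))\,ds\Bigr|_\infty
\le t^{j/2}\int_0^t C_1(t-s)^{-1/2}\, |D^{j} g(u(s))|_\infty\, ds,
\]
wait—this needs $\mc D^j$ absorbed by Lemma 2.2 directly, giving kernel bound $C_j(t-s)^{-(j+1)/2}$ acting on $|g(u(s))|_\infty$ only when $j=0$; for $j\ge 1$ one instead writes $\mc D^j e^{\triangle(t-s)} D_i\mb P g = \mc D^{j-1}\bigl(\mc D^1 e^{\triangle(t-s)/2}\bigr) e^{\triangle(t-s)/2} D_i\mb P g$ and distributes, using semigroup splitting $e^{\triangle(t-s)} = e^{\triangle(t-s)/2}e^{\triangle(t-s)/2}$ so that $j$ derivatives hit one factor (cost $(t-s)^{-j/2}$ by (2.2)) and $D_i\mb P$ hits the other (cost $(t-s)^{-1/2}$ by Lemma 2.2), total $(t-s)^{-(j+1)/2}$ against $|g(u(s))|_\infty\le C|f|_\infty^2$ — but $\int_0^t(t-s)^{-(j+1)/2}ds$ diverges for $j\ge 1$, so for $j\ge1$ I must move some derivatives onto $g(u(s))$: split the $j$ derivatives as $j = r + (j-r)$ with $r$ on the kernel and $j-r$ on $g$, choosing $r=0$ so all $j$ land on $g(u(s))$, giving $\int_0^t (t-s)^{-1/2}|D^j g(u(s))|_\infty ds \le C\bigl(\sum_{a+b=j}[u]_a[u]_b\bigr)\int_0^t (t-s)^{-1/2}s^{-j/2}ds = C'\bigl(\sum_{a+b=j}[u]_a[u]_b\bigr)t^{(1-j)/2}$ using the Beta integral, so after multiplying by $t^{j/2}$ we get $C' t^{1/2}\sum_{a+b=j}[u]_a[u]_b \le C'' c_0^{1/2}|f|_\infty^{-1}\sum_{a+b=j}[u]_a[u]_b$.

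The main obstacle, and the point requiring care, is closing this induction: the sum $\sum_{a+b=j}[u]_a[u]_b$ contains the terms $[u]_0[u]_j$ (twice), which involves the very quantity $[u]_j$ being estimated. One absorbs these two terms into the left side using that $[u]_0\le 2|f|_\infty$ and the prefactor $C'' c_0^{1/2}|f|_\infty^{-1}$, so that $2C''c_0^{1/2}|f|_\infty^{-1}[u]_0 \le 4C''c_0^{1/2} < 1/2$ once $c_0$ is chosen small enough (note $c_0$ may need to be shrunk finitely often, once per $j$ up to any fixed order, or uniformly if one tracks constants — in fact a single $c_0$ works since the smallness condition is $4C''c_0^{1/2}<1/2$ independent of $j$, only $K_j$ grows). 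The remaining terms $[u]_a[u]_b$ with $1\le a,b\le j-1$ are controlled by the inductive hypotheses $[u]_a\le K_a|f|_\infty$, yielding $[u]_j \le (C_j + \text{const}\cdot\sum_{a+b=j, a,b\ge1}K_aK_b|f|_\infty)|f|_\infty$, which defines $K_j$ recursively. A secondary technical point is justifying that $[u]_j<\infty$ a priori (so the absorption is legitimate) — this follows from the known $C^\infty$ smoothness of $u$ on $(0,T_f)$ together with continuity up to any $t<T_f$, giving local-in-time finiteness, and a standard continuation/openness argument on the set of $T$ for which the bound holds. With these pieces in place the estimate (3.4) follows with constants independent of $t$ and $f$, completing the proof; Theorem 1.1 is then obtained in Section 4 by the same scheme applied to $g(u) = u_i u$ summed over $i$.
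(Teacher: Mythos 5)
Your part (a) is fine: replacing the paper's continuity-plus-contradiction argument (Lemma 3.2) by a contraction mapping in the ball of radius $2|f|_{\infty}$ is a legitimate variant resting on the same key input, Lemma 2.2 with $j=1$. The problem is in part (b). After settling on the choice ``$r=0$, all $j$ derivatives land on $g$,'' you bound the Duhamel term by
$\int_0^t (t-s)^{-1/2}\,|D^j g(u(s))|_{\infty}\,ds \leq C\bigl(\sum_{a+b=j}[u]_a[u]_b\bigr)\int_0^t (t-s)^{-1/2}s^{-j/2}\,ds$
and evaluate the last integral as $C' t^{(1-j)/2}$ ``using the Beta integral.'' That identity requires both exponents to exceed $-1$; for $j\geq 2$ the factor $s^{-j/2}$ is not integrable at $s=0$ and the integral is infinite. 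So your estimate closes only for $j=1$, and the induction never gets off the ground at $j=2$.

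The missing idea --- and the one the paper uses --- is to split the Duhamel integral at $s=t/2$ and distribute the derivatives differently on the two pieces. On $I_1=\int_0^{t/2}$ one puts all $j+1$ derivatives on the heat semigroup together with $\mb P$ (Lemma 2.2 at order $j+1$), paying $(t-s)^{-(j+1)/2}\leq C t^{-(j+1)/2}$, which is harmless over an interval of length $t/2$ and never produces a negative power of $s$; only the crude bound $|g(u(s))|_{\infty}\leq C|f|_{\infty}^2$ is needed there. On $I_2=\int_{t/2}^t$ one puts a single derivative plus $\mb P$ on the kernel and all $j$ derivatives on $g(u(s))$, exactly as you do, but now $s\geq t/2$ makes $s^{-j/2}\leq Ct^{-j/2}$ bounded and $\int_{t/2}^t(t-s)^{-1/2}s^{-j/2}\,ds=Ct^{(1-j)/2}$ is finite. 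Your fixed choice of $r=0$ over all of $[0,t]$ cannot be repaired by absorption tricks, since the divergence occurs before any constants enter. Two further, more minor, points of comparison: your absorption of the $[u]_0[u]_j$ terms into the left-hand side presupposes that $[u]_j<\infty$ on the whole interval, which you only sketch; the paper instead runs a contradiction argument on $\phi(t)=t^{j/2}|\mc D^j u(t)|_{\infty}$ (needing only continuity of $\phi$), obtains the bound first on the possibly shorter interval $[0,c_j/|f|_{\infty}^2]$ with $c_j=\min\{c_0,1/(4C_j^2)\}$, and then covers the rest of $[0,c_0/|f|_{\infty}^2]$ by restarting the estimate at $t-T_j$, which also sidesteps your claim that the absorption constant is independent of $j$.
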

\bigskip 

Proof of part (a) will be given in the following lemma, and consecutively, we will also derive  the estimate (3.4) of part (b). Consider $u$ as the solution of the inhomogeneous heat equation $ u_t = \lap u + D_i \mb P F $ where  \begin{align*}
    F(x,t) := g(u(x,t)) \ts \text{for} \ts x \in \mb R^n,  \ts 0 \leq t < T_f. 
\end{align*}
Since $ g $ is quadratic in $u$, there is a constant $C_g$ such that we have the following:
\begin{align}
| g(u) | \leq C_g |u|^2 , \ts |g_u(u)| \leq C_g |u| \ts \text{for all} \ts u \in \mb R^n.
\end{align}
Next lemma estimates the maximum norm of $u$.
\bigskip

\begin{Lemma}
Let $C_g$ denote the constant in (3.5) and let $C$ denote the constant in (2.6); set $c_0 =\frac{1}{16C^2 C_g^2} $. Then we have $T_f > c_0/{ | f |_{\infty}^2 } $ and 
 \begin{align}
 | u (t) |_{\infty} < 2 | f |_{\infty} \ts \text{ for } \ts 0 \leq t < \frac{c_0}{|f|_{\infty}^2 }.
 \end{align}
 \end{Lemma}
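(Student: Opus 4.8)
The plan is to run a continuity (bootstrap) argument on the integral form of (3.1)--(3.2). By Duhamel's principle the solution satisfies
\[
u(t) = e^{\triangle t} f + \int_0^t e^{\triangle(t-s)} D_i \mb P F(s)\, ds, \qquad F(s) := g(u(s)),
\]
and since $D_i$ commutes with $e^{\triangle(t-s)}$ and with $\mb P$, the integrand equals $D_i e^{\triangle(t-s)} \mb P F(s)$, which is precisely what Lemma 2.2 estimates (with $j=1$). Combining (2.5) for the linear term, Lemma 2.2 with its constant $C$ of (2.6) for the nonlinear term, and the quadratic bound $|g(u)| \le C_g |u|^2$ of (3.5), I would reach
\[
|u(t)|_\infty \le |f|_\infty + C C_g \int_0^t (t-s)^{-1/2} |u(s)|_\infty^2 \, ds, \qquad 0 \le t < T_f .
\]

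Next comes the bootstrap. Let $T^\ast$ be the supremum of those $\tau \in [0,T_f)$ for which $|u(s)|_\infty \le 2|f|_\infty$ on all of $[0,\tau]$; since $t \mapsto |u(t)|_\infty$ is continuous and $|u(0)|_\infty = |f|_\infty$, one has $T^\ast > 0$. For $t < \min(T^\ast, c_0/|f|_\infty^2)$ the displayed inequality, the identity $\int_0^t (t-s)^{-1/2}\,ds = 2t^{1/2}$, and the hypothesis $|u(s)|_\infty \le 2|f|_\infty$ on $[0,t]$ give a bound of the shape $|u(t)|_\infty \le |f|_\infty + (\text{const})\, C C_g |f|_\infty^2\, t^{1/2}$; the numerical value $c_0 = 1/(16C^2C_g^2)$ in the statement is calibrated so that this nonlinear contribution stays strictly below $|f|_\infty$ there, hence $|u(t)|_\infty < 2|f|_\infty$. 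Consequently $T^\ast$ cannot be simultaneously $< c_0/|f|_\infty^2$ and $< T_f$: in that case the strict inequality for $t$ near $T^\ast$ would, by continuity of $u$ at $T^\ast$, persist slightly past $T^\ast$, contradicting the definition of $T^\ast$. So $T^\ast \ge \min(c_0/|f|_\infty^2,\, T_f)$, and on $[0,T^\ast)$ the strict bound $|u(t)|_\infty < 2|f|_\infty$ holds.

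To finish I would invoke the local existence and continuation theory recalled just before the statement: the solution is smooth on a maximal interval $(0,T_f)$ and can be continued beyond any time at which $|u(t)|_\infty$ stays bounded. If $T_f \le c_0/|f|_\infty^2$, then $T^\ast = T_f$ and the a priori bound $|u(t)|_\infty < 2|f|_\infty$ would hold up to $T_f$, ruling out blow-up and contradicting maximality of $T_f$. Therefore $T_f > c_0/|f|_\infty^2$, and combined with the previous paragraph this yields $|u(t)|_\infty < 2|f|_\infty$ on $0 \le t < c_0/|f|_\infty^2$, which is exactly (3.6).

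I expect the main obstacle to be the self-referential bookkeeping of the bootstrap --- pinning down the threshold $2|f|_\infty$ and the constant $c_0$ so that the estimate genuinely closes with a \emph{strict} improvement --- together with the continuation step that converts the a priori $L^\infty$ bound on $[0,T^\ast)$ into the lower bound on the existence time $T_f$ via the blow-up criterion of the cited local theory, rather than re-deriving local existence from scratch. The remainder is the routine Young-inequality convolution estimate already packaged inside Lemma 2.2.
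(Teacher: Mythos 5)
Your proposal is correct and follows essentially the same route as the paper: Duhamel's formula, the bound on $D_i e^{\triangle(t-s)}\mb P$ from Lemma~2.2 with $j=1$, the quadratic bound (3.5), and the blow-up criterion to convert the a priori bound into $T_f > c_0/|f|_\infty^2$. The paper packages the continuity step as a contradiction at the first time $t_0$ with $|u(t_0)|_\infty = 2|f|_\infty$ rather than as a bootstrap with a supremum $T^\ast$, but this is the same argument in different clothing (and both versions share the same mild looseness about whether the factor from $\int_0^t (t-s)^{-1/2}\,ds = 2t^{1/2}$ is absorbed into $C$ in the exact value of $c_0$, which is immaterial).
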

 \begin{proof}
 Suppose (3.6) does not hold, then we can find the smallest time $t_0$ such that $ |u(t_0) |_{\infty} = 2 | f |_{\infty} $. Since $t_0$ is the smallest time so we have $ t_0 < c_0/|f|_{\infty}^2 $. Now by (2.1) and (2.6) we have 
 \begin{align*}
 2|f|_{\infty}  & = |u(t_0) |_{\infty} \\
 & \leq |f|_{\infty} + C t_0^{1/2} \mathop{\max}_{0\leq s \leq t_0 } | g(s)  |_{\infty} \\
 & \leq |f|_{\infty} + C C_g t_0^{1/2} \mathop{\max}_{0 \leq s \leq t_0}  | u (s) |_{\infty}^2  \\
 & \leq | f|_{\infty} + C C_g t_0^{1/2} 4 |f |_{\infty}^2 .
 \end{align*}
 This gives 
 \begin{align*}
 1 \leq 4 C C_g t_0^{1/2} | f |_{\infty}, 
 \end{align*}
 therefore $ t_0 \geq 1/{( 16 C^2 C_g^2 | f |_{\infty}^2)}= c_0/{|f|_{\infty}^2} $ which is a contradiction. Therefore (3.6) must hold. The estimate $ T_f > c_0/{| f |_{\infty}^2} $ is valid since $ \limsup_{t \to T_f} | u(t) |_{\infty} = \infty $ if $ T_f $ is finite.
 \end{proof}
 \bigskip 

Now we prove estimate (3.4) of Theorem 3.1 by induction on $j$. Let $j \geq 1 $ and assume 
 \begin{align}
 t^{k/2} | \mc D^k u (t) |_{\infty} \leq K_k | f|_{\infty}, \ts \text{for} \ts  0 \leq  t \leq  \frac{c_0}{|f|_{\infty}^2}  \ts \text{and} \ts 0 \leq   k \leq j-1. \end{align}
 where $c_0$ is the same constant as in the previous lemma. 
 Next, we begin by applying $ D^j$ to the equation $u_t = \triangle u + D_i \mb P g(u) $ to obtain 
 \begin{align*}
 v_t = \triangle v + D^{j+1} \mb P g(u), \ts  v:= D^j u ,\\
 v(t) = D^j e^{\triangle t } f + \int_0^t e^{\triangle (t-s)} D^{j+1} ( \mb P g(u))(s) ds .
 \end{align*}
 Using (2.2) we get 
 \begin{align}
 t^{j/2} |v(t)|_{\infty} \leq C |f|_{\infty} + t^{j/2} \biggl  |\int_0^t e^{\triangle (t-s)} D^{j+1} ( \mb P g(u)) (s) ds \biggr |_{\infty}.  
 \end{align}
 We split the integral into 
 \begin{align*}
 \int_0^{t/2} + \int_{t/2}^t =: I_1 + I_2 
 \end{align*}
 and obtain 
\begin{align*}
 |I_1(t)|  & = \biggl | \int_0^{t/2} D^{j+1} e^{\triangle (t-s) } ( \mb P g(u))(s) ds \biggr |_{\infty} \\
 & \leq \int_0^{t/2} |D^{j+1} e^{\triangle (t-s)} (\mb P g (u )) (s) ds |_{\infty} ds .
 \end{align*}
 Using the inequality in Lemma 2.2, we get 
 \begin{align*}
 | I_1(t) |_{\infty} & \leq C  \int_0^{t/2} (t-s)^{-(j+1)/2} |g(u(s))|_{\infty} ds \\
 & \leq  C |f |_{\infty}^2 t^{(1-j)/2}. \\
 \end{align*}
 The integrand in $I_2$ has singularity at $s=t$. Therefore, we can move only one derivative from $D^{j+1} \mb P  g(u)$ to the heat semigroup.( If we move two or more derivatives then the singularity becomes non-integrable.) Thus, we have 
 \begin{align*}
 |I_2(t)|_{\infty}   = \biggl | - \int_{t/2}^t De^{\triangle (t-s)} (D^j  \mb P g(u))(s)  ds \biggr | _{\infty}. \nonumber 
 \end{align*}
 Since the Leray projector commutes with any derivatives, therefore 
 \begin{align*}
 |I_2(t) |_{\infty}  = \bigg| - \int_{t/2}^t D e^{\triangle (t-s) } ( \mb P D^j g (u))(s) ds \bigg|_{\infty}.
 \end{align*}
 If we use Lemma 2.2 for $j=1$, we obtain 
 \begin{align}
 |I_2(t) |_{\infty} \leq C \int_{t/2}^t (t-s)^{-1/2} |D^j g(u(s)) |_{\infty} ds. 
 \end{align}
 Since $g(u)$ is quadratic in $u$, therefore
 \begin{align*}
 |D^j  g(u) |_{\infty} \leq C | u|_{\infty} | \mc D^j u |_{\infty} +  \sum_{k=1}^{j-1} | \mc D^k u |_{\infty} | \mc D^{j-k} u |_{\infty} .
 \end{align*}
 By induction hypothesis (3.7) we obtain 
 \begin{align}
 \sum_{k=1}^{j-1} | \mc D^k u (s)  |_{\infty} | \mc D^{j-k} u (s)  |_{\infty} \leq C s^{-j/2} | f |_{\infty}^2. 
 \end{align}
 Integral (3.9) can be estimated as below: 
 \begin{align*}
  |I_2(t) |_{\infty}  &\leq C \int_{t/2}^t (t-s)^{-1/2} \bigg(  C | u(s)|_{\infty} | \mc D^j u(s)  |_{\infty} +  \sum_{k=1}^{j-1} | \mc D^k u (s)  |_{\infty} | \mc D^{j-k} u (s)  |_{\infty}  \bigg) ds \\
&= J_1 + J_2. 
 \end{align*}
Using (3.10), and since  $ \int_{t/2}^t (t-s)^{-1/2} s^{-j/2} ds = C t^{(1-j)/2}$, where $C$ is independent of $t$, we obtain $|J_2(t) |_{\infty} \leq C | f |_{\infty}^2 t^{(1-j)/2}$.
 \medskip \\ 
 For $J_1$, we have 
 \begin{align*}
 | J_1(t) |_{\infty}  &= C \int_{t/2}^t (t-s)^{-1/2} |u(s)|_{\infty} | \mc D^j u(s) |_{\infty} ds  \\
 & \leq C |f|_{\infty} \int_{t/2}^t (t-s)^{-1/2} s^{-j/2} s^{j/2} |\mc D^j u (s) |_{\infty} ds \\
 & \leq C |f|_{\infty} t^{(1-j)/2} \mathop{\max}_{0 \leq s \leq t } \{ s^{j/2} \mc D^j u(s) |_{\infty} \}. 
 \end{align*}
 We use these bounds to bound the integral in (3.8). We have $v= D^j u $. Then maximizing the resulting estimate for $t^{j/2} |D^j u(t)|_{\infty}$ over all derivatives $D^j$ of order $j$ and setting 
\begin{align*}
\phi (t) := t^{j/2} |\mc D^j u(t)|_{\infty} 
\end{align*}
and from (3.8), we obtain the following estimate 
\begin{align*}
\phi(t) \leq C |f|_{\infty} +C t^{1/2} |f|_{\infty}^2 +C |f|_{\infty} t^{1/2} \mathop{\max}_{0\leq s \leq t} \phi (s) \ts \text{for} \ts 0 \leq t \leq \frac{c_0}{|f|_{\infty}^2 }.
\end{align*}
Since $t^{1/2} |f|_{\infty} \leq \sqrt{c_0} $ then $C t^{1/2}  |f|_{\infty}^2 \leq C \sqrt{c_0} |f|_{\infty} $. Therefore
\medskip
\begin{align}
\phi (t) \leq C_j |f|_{\infty} + C_j |f|_{\infty} t^{1/2} \mathop{\max}_{0\leq s \leq t} \phi (s)  \ts \text{for} \ts 0 \leq   t \leq c_0/{|f|_{\infty}^2 }.
\end{align}
Let us fix $C_j$ so that the above estimate holds and set 
\begin{align*}
c_j = \min \bigg\{ c_0 , \frac{1}{4 C_j^2} \bigg\}.
\end{align*}
\medskip
First, let us prove the following
\begin{align*}
\phi(t) < 2 C_j |f|_{\infty}  \ts \text{for} \ts 0 \leq  t <  \frac{c_j }{|f|_{\infty}^2}.
\end{align*}
Suppose there is a smallest time  $t_0$ such that $ 0 < t_0 < c_j/{|f|_{\infty}^2 }  $ with $\phi(t_0) = 2C_j |f|_{\infty} $. Then using (3.11) we obtain
\begin{align*}
2 C_j |f|_{\infty} = \phi (t_0) \leq C_j |f|_{\infty} + 2 C_j^2  |f|_{\infty}^2 t_0^{1/2} ,
\end{align*}
thus 
\begin{align*}
1\leq 2 C_j |f|_{\infty} t_0^{1/2}  \ts \text{gives} \ts t_0 \geq c_j/{|f|_{\infty}^2}  
\end{align*}
which contradicts the assertion. Therefore, we proved the estimate 
\begin{align}
t^{j/2} |\mc D^j u(t) | _{\infty} \leq 2 C_j |f|_{\infty}   \ts \text{for} \ts 0 \leq  t \leq  c_j/{|f|_{\infty}^2 }.
\end{align}
If 
\begin{align}
T_j:= \frac{c_j}{|f|_{\infty}^2} < t \leq \frac{c_0}{|f|_{\infty}^2}=: T_0
\end{align}
then we start the corresponding estimate at $t-T_j$. Using Lemma 3.2, we have $|u(t-T_j)|_{\infty} \leq 2 |f|_{\infty}$ and obtain 
\begin{align}
T_j^{j/2} |\mc D^j u(t) |_{\infty} \leq 4 C_j |f|_{\infty} . 
\end{align}
Finally, for any $t$ satisfying (3.13) 
\begin{align*}
t^{j/2} \leq T_0^{j/2} = \bigg( \frac{c_0}{c_j} \bigg)^{j/2} T_j^{j/2}
\end{align*}
and (3.14) yield 
\begin{align*}
t^{j/2} |\mc D^j u(t) |_{\infty} \leq 4 C_j  \bigg( \frac{c_0}{c_j} \bigg)^{j/2} |f|_{\infty}.
\end{align*}
This completes the proof of Theorem 1.1. 

\section{Estimates For the Navier-Stokes Equations}
\bigskip

Recall the transformed abstract ordinary differential equation (1.7)
\begin{align}
u_t = \triangle u - \mb P (u \cdot \nabla  u), \ts \nabla \cdot u = 0 
\end{align}
with 
\begin{align}
   u(x,0)=f(x).
\end{align}
Solution of (4.1) and (4.2) is given by 
\begin{align}
u(t) = e^{\triangle t } f -\int_0^t  e^{ \triangle (t-s) } \mb P(u \cdot \nabla u)(s)  ds .
\end{align}
\medskip \\
 Using the solution (4.3) with previous  estimates  (2.1),(2.2) and (2.4) we prove the following lemma.
\bigskip

\begin{Lemma}
Set 
 \begin{align}
 V(t)= |u(t)|_{\infty} + t^{1/2} |\mc D u(t) |_{\infty} , \hs 0 <  t < T(f). 
 \end{align}
 \medskip 
 There is a constant $C>0$, independent of $t$ and $f$, so that 
 \begin{align}
 V(t) \leq C|f|_{\infty} + C t^{1/2} \mathop{\max}_{0 \leq s \leq t }{V^2(t)}  , \hs 0 < t < T(f). 
 \end{align}
 \end{Lemma}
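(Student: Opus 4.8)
The plan is to estimate each of the two terms in $V(t)$ separately using the integral representation (4.3), and then combine. Writing $u(t) = e^{\triangle t}f - \int_0^t e^{\triangle(t-s)}\mb P(u\cdot\nabla u)(s)\,ds$, I would first bound $|u(t)|_\infty$. By (2.1) the linear term is bounded by $|f|_\infty$. For the nonlinear term, observe that $u\cdot\nabla u = \sum_i D_i(u_i u)$ since $\nabla\cdot u = 0$, so that $\mb P(u\cdot\nabla u) = \sum_i D_i\mb P(u_i u)$, and one derivative can be moved onto the heat semigroup: $\int_0^t e^{\triangle(t-s)}D_i\mb P(u_i u)(s)\,ds$. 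Applying Lemma 2.2 with $j=1$ gives the factor $(t-s)^{-1/2}$, and $|u_i u(s)|_\infty \leq |u(s)|_\infty^2 \leq V(s)^2$. Since $\int_0^t (t-s)^{-1/2}\,ds = 2t^{1/2}$, this yields $|u(t)|_\infty \leq |f|_\infty + C t^{1/2}\max_{0\le s\le t}V^2(s)$.

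Next I would bound $t^{1/2}|\mc D u(t)|_\infty$. Applying $D$ to (4.3): $D u(t) = D e^{\triangle t}f - \int_0^t e^{\triangle(t-s)}D\mb P(u\cdot\nabla u)(s)\,ds$, and by (2.2) the linear piece contributes $C t^{-1/2}|f|_\infty$, i.e. $C|f|_\infty$ after multiplying by $t^{1/2}$. For the nonlinear piece, write $D\mb P(u\cdot\nabla u) = \sum_i D D_i \mb P(u_i u)$ and split the integral as $\int_0^{t/2} + \int_{t/2}^t$, exactly as in the proof of Theorem 3.1. On $[0,t/2]$ move both derivatives onto the semigroup (using Lemma 2.2 with $j=2$), giving integrand $(t-s)^{-1}|u_i u(s)|_\infty \leq (t-s)^{-1}V(s)^2$; since $s\le t/2$ we have $(t-s)^{-1}\le 2/t$, and integrating over $[0,t/2]$ produces a bounded multiple of $V^2$, hence $\leq C t^{-1/2}\max V^2$ after accounting for the scaling (one is left with a harmless $t^{0}$ which is absorbed; more carefully, on $[0,t/2]$, $(t-s)^{-1}\le 2t^{-1}$ and the length is $t/2$, so $I_1 \le C \max_{0\le s\le t}V^2(s)$, and then $t^{1/2}I_1 \le C t^{1/2}\max V^2 \le C\sqrt{c_0}\max V^2$ on the relevant time interval — but to keep the clean form (4.6) I would instead keep one derivative off and use $\int_0^{t/2}(t-s)^{-1}\,ds$ scaled properly). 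On $[t/2,t]$ move only one derivative onto the semigroup: $\int_{t/2}^t D e^{\triangle(t-s)}(D_i\mb P(u_i u))(s)\,ds = \int_{t/2}^t D e^{\triangle(t-s)}\mb P D_i(u_i u)(s)\,ds$, use Lemma 2.2 with $j=1$ to get $(t-s)^{-1/2}|D_i(u_i u)(s)|_\infty$, and estimate $|D_i(u_i u)(s)|_\infty \leq C|u(s)|_\infty|\mc D u(s)|_\infty \leq C s^{-1/2}V(s)^2$; then $\int_{t/2}^t (t-s)^{-1/2}s^{-1/2}\,ds = C$, giving a contribution $\leq C\max_{0\le s\le t}V^2(s)$, and multiplying by $t^{1/2}$ and using $t^{1/2}\leq C$ on the time interval in question keeps everything of the form $C t^{1/2}\max V^2$. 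Adding the two bounds for $|u(t)|_\infty$ and $t^{1/2}|\mc D u(t)|_\infty$ gives (4.6).

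The main obstacle is the $I_1$ term on $[0,t/2]$ when $D$ acts on the already-present $D_i$: this requires Lemma 2.2 at $j=2$, and one must be careful that the resulting $(t-s)^{-1}$ singularity is integrable away from $s=t$ and that the bookkeeping of powers of $t$ and $s$ produces exactly the homogeneity needed so the bound collapses into the advertised form $Ct^{1/2}\max_{0\le s\le t}V^2(s)$ rather than something with a different time weight. The key structural facts making this work are that $\nabla\cdot u=0$ lets us write $u\cdot\nabla u$ in divergence form $\sum_i D_i(u_i u)$, that $\mb P$ commutes with all derivatives, and that the composite operator $D^j e^{\triangle t}\mb P$ obeys the same $t^{-j/2}$ bound as $D^j e^{\triangle t}$ by Lemma 2.2 — this is precisely what lets the pressure-free formulation go through without ever touching $BMO$ estimates.
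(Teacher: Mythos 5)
Your proposal is correct, and the first half (the bound on $|u(t)|_\infty$ via the divergence form $\mb P(u\cdot\nabla u)=\sum_i D_i\mb P(u_iu)$ and Lemma 2.2 at $j=1$) is essentially identical to the paper's. Where you diverge is the gradient estimate. The paper does not split the integral and does not use the divergence form there: it simply applies Lemma 2.2 with $j=1$ to $D_ie^{\triangle(t-s)}\mb P$ acting on $(u\cdot\nabla u)(s)$ over all of $[0,t]$, bounds $|(u\cdot\nabla u)(s)|_\infty\le|u(s)|_\infty|\mc Du(s)|_\infty\le s^{-1/2}\max_{0\le\sigma\le t}V^2(\sigma)$, and uses that $\int_0^t(t-s)^{-1/2}s^{-1/2}\,ds$ is a constant independent of $t$; since the only semigroup singularity is $(t-s)^{-1/2}$, which is integrable at $s=t$, no splitting is needed at this order. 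Your route --- splitting at $t/2$, using Lemma 2.2 at $j=2$ on $[0,t/2]$ and at $j=1$ on $[t/2,t]$ with $|D_i(u_iu)|_\infty\le C|u|_\infty|\mc Du|_\infty$ --- also works: $\int_0^{t/2}(t-s)^{-1}\,ds=\ln 2$ and $\int_{t/2}^t(t-s)^{-1/2}s^{-1/2}\,ds\le C$, so both pieces are $\le C\max V^2$ and multiplying by $t^{1/2}$ gives exactly $Ct^{1/2}\max V^2$; your hedging about the bookkeeping of powers of $t$ in $I_1$, and the appeal to $t^{1/2}\le C$ for $I_2$, are both unnecessary (and the latter would be illegitimate anyway, since the lemma is asserted for all $0<t<T(f)$, not only on the short time interval). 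The trade-off is that your splitting argument is the one that is genuinely forced at higher orders $j\ge 1$ (where $(t-s)^{-(j+1)/2}$ is non-integrable at $s=t$), so you are previewing the induction step of Theorem 3.1, at the cost of a slightly heavier argument than the paper needs for this first-derivative case.
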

  \begin{proof} Using estimate (2.1)  of the heat equation  in (4.3), we obtain
 \begin{align*}
 |u(t)|_{\infty}  & \leq |f|_{\infty} + \bigg| \int_0^t e^{\triangle(t-s) }  \mb P (u \cdot \nabla u )(s) ds \bigg|_{\infty}. 
 \end{align*}
 Apply identity $\mb P (u\cdot \nabla u ) = \sum_i D_i \mb P (u_i u )  $ with  the fact,  heat semi-group commutes with $D_i$, then use of the inequality (2.4) in  Lemma 2.2 for $j=1$ to proceed
\begin{align*}
 | u ( t)|_{\infty} & \leq  |f|_{\infty}  + C \int_0^t (t-s)^{-1/2} |u(s)|_{\infty}^2 ds \\
 &  =  |f|_{\infty} + C \int_0^t (t-s)^{-1/2}  s^{-1/2} s^{1/2} |u(s)|_{\infty}^2 ds \\
& \leq |f|_{\infty} + C \mathop{\max}_{0\leq s \leq t } \{ s^{1/2} |u(s)|_{\infty}^2  \} \int_0^t (t-s)^{-1/2}  s^{-1/2} ds.
\end{align*}
Since $  \int_0^t (t-s)^{-1/2}  s^{-1/2} ds = C>0 $ which is independent of $t$,  we have the following estimate 
 \begin{align}
 |u(t)|_{\infty}  & \leq | f|_{\infty} +  C  \mathop{\max}_{0\leq s \leq t } \{ s^{1/2} |u(s)|_{\infty}^2 \}  \nonumber \\
 | u(t)|_{\infty}  & \leq |f|_{\infty} + C t^{1/2}  \mathop{\max}_{0\leq s \leq t }{V^2(s) }. 
 \end{align}
 Apply $D_i$ to (4.1) to obtain
% \medskip 
%\begin{align*}
% v_t = \triangle v - D_i \mb P (u \cdot \nabla ) u , \ts v := D_i u. 
% \end{align*}
% Then 
 \begin{align}
 v(t) = D_i e^{\triangle t} f - \int_0^t e^{ \triangle (t-s)  } D_i \mb P (u \cdot \nabla ) u(s) ds .
 \end{align}
%Since $D_i $ commutes with heat semi-group, therefore we can move $D_i$ to $e^{\triangle t } $ and obtain
%Using (2.4) of Lemma 2.2 for $j=1$ one more time to  proceed  
  %\begin{align}
%v(t) = D_i e^{\triangle t }  f - \int_0^t D_i e^{(t-s) \triangle}  \mb P (u \cdot \nabla ) u(s) ds 
%\end{align}
We can estimate the integral in (4.7) using Lemma 2.2 for $j=1$ in the following way: 
\begin{align*}
  \bigg|\int_0^t D_i e^{ \triangle (t-s)}  \mb P (u \cdot \nabla  u)(s) ds \bigg| &  \leq  \int_0^t |D_i e^{ \triangle  (t-s)}  \mb P(u\cdot \nabla u) (s) | ds \\
 & \leq C \int_0^t (t-s)^{-1/2} |u(s)|_{\infty} | \mc D u(s)|_{\infty} ds \\
 &= C \int_0^t (t-s)^{-1/2} s^{-1/2} s^{1/2} |u(s)|_{\infty} |\mc D u(s) |_{\infty} ds \\
 & \leq C \mathop{\max}_{0 \leq s \leq t } { \{s^{1/2} | u(s) |_{\infty} |\mc D u(s)|_{\infty}\}  }   \int_0^t (t-s)^{-1/2}  s^{-1/2} ds \\
 & \leq C \mathop{\max}_{0\leq s \leq t } { \{ |u(s)|_{\infty}^2 + s | \mc D u(s) |_{\infty}^2 \} } .
\end{align*}
Therefore, we arrive at 
\begin{align}
|v(t) |_{\infty}  & \leq C t^{-1/2} |f|_{\infty} + C  \mathop{\max}_{0\leq s \leq t } { \{ |u(s)|_{\infty}^2 + s | \mc D u(s) |_{\infty}^2 \} }  \nonumber  \\
t^{1/2} |\mc D u(t) |_{\infty}  & \leq C |f|_{\infty} + C t^{1/2}  \mathop{\max}_{0\leq s \leq t } {V^2(t) }. 
\end{align}
 Using  (4.6) and (4.8), we have proved Lemma 4.1. 
\end{proof}
\bigskip

\begin{Lemma}
Let $C>0$  denote the constant in estimate (4.5) and set 
\begin{align*}
c_0 = \frac{1}{16C^4}.
\end{align*}
Then $T_f  > c_0 /{|f|_{\infty}^2}$  and 
\begin{align}
| u(t) |_{\infty} + t^{1/2} | \mc D u (t) |_{\infty} < 2C |f|_{\infty} \ts  \text{for}  \ts 0 \leq t < \frac{c_0}{|f|_{\infty}^2}. 
\end{align}
\end{Lemma}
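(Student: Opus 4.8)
The plan is to run a continuity/bootstrap argument in exactly the same spirit as Lemma 3.2, using the quadratic inequality (4.5) in place of (3.6). I would argue by contradiction: assuming (4.10) fails, there is a smallest time $t_0$ with $0 < t_0 < c_0/|f|_\infty^2$ at which $V(t_0) = 2C|f|_\infty$, where $V$ is as in (4.4) and $C$ is the constant from (4.5). By minimality of $t_0$ we have $V(s) \leq 2C|f|_\infty$ for all $0 \leq s \leq t_0$, so $\max_{0\leq s\leq t_0} V^2(s) \leq 4C^2|f|_\infty^2$.

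Next I would substitute this bound into (4.5) evaluated at $t_0$:
\begin{align*}
2C|f|_\infty = V(t_0) \leq C|f|_\infty + C t_0^{1/2} \cdot 4C^2 |f|_\infty^2 = C|f|_\infty + 4C^3 t_0^{1/2}|f|_\infty^2.
\end{align*}
Cancelling $C|f|_\infty$ from both sides and dividing by $C|f|_\infty$ gives $1 \leq 4C^2 t_0^{1/2}|f|_\infty$, hence $t_0^{1/2} \geq 1/(4C^2|f|_\infty)$, i.e. $t_0 \geq 1/(16C^4|f|_\infty^2) = c_0/|f|_\infty^2$. This contradicts $t_0 < c_0/|f|_\infty^2$, so (4.10) holds on the stated interval. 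The bound $T_f > c_0/|f|_\infty^2$ then follows from the blow-up criterion $\limsup_{t\to T_f}|u(t)|_\infty = \infty$ when $T_f < \infty$, exactly as at the end of the proof of Lemma 3.2; since $V(t)$, and in particular $|u(t)|_\infty$, stays bounded by $2C|f|_\infty$ up to $c_0/|f|_\infty^2$, the maximal existence time must exceed this value.

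The only genuinely delicate point is justifying that the continuity/minimality setup is legitimate: one needs $t \mapsto V(t)$ to be continuous on $(0, T_f)$ and to satisfy $V(t) \to |f|_\infty$ (or at least $\limsup_{t\to 0^+} V(t) \leq C|f|_\infty$) as $t \to 0^+$, so that the "smallest bad time" $t_0$ is well-defined and strictly positive. This is where I would lean on the known smoothness of the mild solution on $(0,T_f)$ together with estimates (2.1)--(2.2) for small $t$; the smallness of $|u(t)|_\infty + t^{1/2}|\mathcal D u(t)|_\infty$ near $t=0$ is precisely the local existence estimate of Giga et al.\ cited in the introduction. Everything else is the routine algebra above, mirroring Lemma 3.2 verbatim with $C_g$ replaced by $C$ and the factor bookkeeping adjusted ($4C^3$ versus the analogous term in Section 3), which is why $c_0 = 1/(16C^4)$ is the right threshold.
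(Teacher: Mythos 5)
Your argument is correct and follows essentially the same route as the paper: a contradiction at the smallest time $t_0$ with $V(t_0)=2C|f|_\infty$, substitution of the bound $\max V^2 \leq 4C^2|f|_\infty^2$ into (4.5), and the conclusion $t_0 \geq c_0/|f|_\infty^2$ (your intermediate inequality $1\leq 4C^2 t_0^{1/2}|f|_\infty$ is in fact the correct form; the paper's displayed $|f|_\infty^2$ there is a typo). Your added remark about the continuity of $V$ and its behavior as $t\to 0^+$ addresses a point the paper leaves implicit, but does not change the substance of the argument.
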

\begin{proof}
We prove this lemma by contradiction after recalling the definition of $V(t)$ in (4.4). Suppose that (4.9) does not hold, then denote by $t_0$ the smallest time with $V(t_0)= 2C |f|_{\infty}$. Use (4.5) to obtain 
\begin{align*}
2C| f |_{\infty}  & = V(t_0) \\
&  \leq C |f|_{\infty} + C t_0^{1/2} 4 C^2 | f |_{\infty}^2, 
\end{align*}
thus 
\begin{align*}
1 \leq 4 C^2 t_0^{1/2} | f |_{\infty}^2 ,
\end{align*}
therefore $t_0 \geq c_0/ | f |_{\infty}^2$. This contradiction proves (4.9) and $T_f > c_0/{|f |_{\infty}^2}$.
\end{proof}
\medskip 
Lemma 4.2 proves Theorem 1.1 for $j=0$ and $j=1$. By an induction argument as in the proof of Theorem 3.1 one proves Theorem 1.1 for any $j =0,1,\cdots $

\section{Remarks}
\bigskip

We can apply estimate (1.9) of Theorem 1.1  for 
\begin{align}
\frac{c_0}{2 | f |_{\infty}^2} \leq t \leq \frac{c_0}{ | f |_{\infty}^2}
\end{align}
and obtain 
\begin{align}
| \mc D^j u (t) |_{\infty} \leq C_j | f |_{\infty}^{j+1} 
\end{align}
 in interval (5.1). Starting the estimate at $ t_0 \in [0, T_f) $ we have 
 \begin{align}
 | \mc D^j u ( t_0 + t ) |_{\infty} \leq C_j | u(t_0) |_{\infty}^{j+1} 
 \end{align}
 for 
 \begin{align}
 \frac{c_0}{2 | u(t_0)|_{\infty}^2} \leq t \leq \frac{c_0}{ | u(t_0)|_{\infty}^2}.
 \end{align}
Then, if $t_1$ is fixed with 
\begin{align}
\frac{c_0}{2 | f|_{\infty}^2} \leq t_1 < T_f,
\end{align}
we can maximize both sides of (5.3) over $ 0 \leq t_0 \leq t_1$ and obtain
\begin{align}
\max \bigg\{ | \mc D^j u(t) |_{\infty } : \frac{c_0}{2 | f|_{\infty}^2} \leq t \leq t_1+ \tau \bigg\} \leq C_j \max \{ | u(t) |_{\infty}^{j+1} : 0 \leq t \leq t_1 \} 
\end{align}
with 
\begin{align*}
\tau = \frac{c_0}{| u (t_1) |_{\infty}^2} 
\end{align*}
\medskip \\
Estimate (5.6) says, essentially, that the maximum of the $j$-th derivatives of $u$ measured by $ | \mc D^j u|_{\infty} $ , can be bounded in terms of $ | u |_{\infty}^{j+1} $. Clearly, a time interval near $t=0$ has to be excluded on the left-hand side of (5.6) for smoothing to become effective. The positive value of $\tau$ on the left-hand side of (5.6) shows that $ |u|_{\infty}^{j+1}$ controls $ | \mc D^j u |_{\infty}$ for some time into the future. 
\medskip  \\
As is well  known, if $(u, p)$ solves the Navier-Stokes equations and $ \lambda >0$ is any scaling parameter, then the functions $ u_{\lambda}, p_{\lambda} $ defined by 
\begin{align*}
u_{\lambda} (x,t) = \lambda u( \lambda x, \lambda^2 t) , \ts  p_{\lambda} ( x,t) = \lambda^2 p( \lambda x , \lambda^2 t ) 
\end{align*}
also solve the Navier-Stokes equations. Clearly,
\begin{align*}
| u_{\lambda} (t) |_{\infty} = \lambda | u ( \lambda^2 t)|_{\infty} , \ts | \mc D^j u_{\lambda} (t) |_{\infty} = \lambda^{j+1} | \mc D^j u ( \lambda^2 t ) |_{\infty}. 
\end{align*}
Therefore, $ | \mc D^j u |_{\infty} $ and $ | u|_{\infty}^{j+1} $ both scale like $ \lambda^{j+1} $, which is, of course, consistent with the estimate (5.6). We do not know under what assumptions $ | u |_{\infty}^{j+1} $ can conversely be estimated in terms of $ | \mc D^j u |_{\infty} $.

\bigskip

\end{document}